\documentclass[14pt]{amsart}
\address{\newline{\normalsize Moscow Institute of Physics and Technology, 9 Institutskiy per., Dolgoprudny,
 Moscow Region, 141701, Russia}
\bigskip
\newline{\it E-mail address}: karzhemanov.iv@mipt.ru}
\usepackage{amscd,amsthm,amsmath,amssymb}
\usepackage[dvips]{graphicx}
\usepackage[matrix,arrow]{xy}

\usepackage{textcomp}

\makeatletter\@addtoreset{equation}{section}\makeatother

\makeatletter\@addtoreset{subsection}{equation}\makeatother

\newcommand{\p}{\mathbb{P}}
\newcommand{\cel}{\mathbb{Z}}
\newcommand{\na}{\mathbb{N}}
\newcommand{\com}{\mathbb{C}}

\newcommand{\pg}{\mathrm{PGL}}

\newcommand{\rea}{\mathbb{R}}

\newcommand{\map}{\longrightarrow}

\newtheorem{theorem}[equation]{Theorem}
\newtheorem{prop}[equation]{Proposition}
\newtheorem{lemma}[equation]{Lemma}

\theoremstyle{remark}
\newtheorem{remark}[equation]{Remark}

\newtheorem{ex}[equation]{Example}

\newtheorem{definition}[equation]{Definition}

\thanks{{\it MS 2010 classification}: 14E05, 37F10, 14J26}

\thanks{{\it Key words}: surjective rational map, algebraic surface, dynamical degree}

\begin{document}

\title{On the dynamical degree of surjective endomorphisms}

\author{Ilya Karzhemanov}

\begin{abstract}
We establish a couple of dynamical properties of surjective
rational maps $f: X \dashrightarrow X$ for smooth projective
surfaces $X$. We also give a numerical condition for regularity of
$f$. Some explicit
constructions and calculations, related to the topological entropy
of $f$, are provided.
\end{abstract}

\sloppy

\maketitle

\bigskip

\section{Introduction}
\label{section:f}

\refstepcounter{equation}
\subsection{}
\label{subsection:p-1}

Let $X$ be a smooth projective complex surface. In the paper
\cite{Ilya-dP}, we have initiated the study of the geometry of
surjective endomorphisms $f: X \dashrightarrow X$, that is
rational maps satisfying the property $f(X \setminus Z) = X$ for
some finite subset $Z \subset X$ (cf. \cite{Ilya-MLD},
\cite{Ilya-quant} and \cite{Ilya-ML} for statistical, physical and
computational aspects of the story). The aim of the present note
is to highlight some \emph{dynamical} sides of such maps $f$.

Consider the N\'eron--Severi group $N^1(X)$. The arguments in \cite[{\bf 2.1}]{Ilya-dP} show
that $f$ induces a linear map $f^*: N^1(X) \to N^1(X)$ with the
property
$$
(f^n)^* = (f^*)^n \ \forall \ n \in \na \quad \text{for the
corresponding iterates}.
$$
The maps with this \emph{algebraic stability} behavior had
acquired a great deal of studies in rational dynamics (see
\cite{Diller-Fav}, \cite{Truong}, \cite{Sib}). In this regard, one
important numerical characteristic of $f$ is the \emph{(first)
dynamical degree} $\lambda(f)$, defined as follows. Let $H \in
\text{Pic}(X)$ be an ample divisor. Then
$$
\lambda(f) := \lim_{n \to \infty} \sqrt[n]{((f^n)^*(H) \cdot H)}.
$$
This definition does not actually depend on the choice of $H$.

Here is a (classical) relation between $\lambda(f)$ and the
spectral radius $\rho(f^*)$ of the linear operator $f^*$:

\begin{prop}
\label{theorem:lambda-1-is-rho} $\lambda(f) = \rho(f^*)$ and there
is a (\emph{Perron}) vector $v \in N^1(X)$, which is a nef class
on $X$, satisfying $f^*(v) = \lambda(f)v$.
\end{prop}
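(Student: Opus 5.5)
The plan is to exploit the algebraic stability $(f^n)^*=(f^*)^n$ together with the fact that $f^*$ preserves the nef cone $\mathrm{Nef}(X)\subset N^1(X)_{\rea}$. First, I would check that $f^*$ maps nef classes to nef classes. For a surjective rational map $f$, write $f^*D=\pi_*\sigma^*D$, where $\pi:\widetilde X\to X$, $\sigma:\widetilde X\to X$ is a resolution of indeterminacy. Since $\sigma^*D$ is nef for nef $D$, its pushforward under a birational morphism of surfaces is again nef: for an irreducible curve $C\subset X$ we have $\pi_*\sigma^*D\cdot C=\sigma^*D\cdot\pi^*C\ge 0$, because $\pi^*C$ is its strict transform plus an effective exceptional divisor, and $\sigma^*D$ is nef. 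So $f^*$ preserves the closed, salient, full-dimensional convex cone $\mathrm{Nef}(X)$.

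Second, I would apply the Perron--Frobenius theorem for cone-preserving linear maps (the Birkhoff--Vandergraft theorem). It gives that $\rho(f^*)$ is an eigenvalue of $f^*$ with an eigenvector $v\in\mathrm{Nef}(X)$, $v\neq 0$. This yields the second assertion as soon as we know $\lambda(f)=\rho(f^*)$.

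Third, I would identify $\lambda(f)$ with $\rho(f^*)$. By stability, $((f^n)^*H\cdot H)=((f^*)^nH\cdot H)$. Fix any norm on $N^1(X)_{\rea}$. The ample class $H$ lies in the interior of $\mathrm{Nef}(X)$, so $H\cdot(\,\cdot\,)$ restricted to the nef cone is comparable to the norm: $c\|w\|\le w\cdot H\le C\|w\|$ for $w$ nef. Indeed, if $w\cdot H=0$ for nef $w$, then the Hodge index theorem forces $w=0$, since $w^2\ge0$. Since $(f^*)^nH$ is nef, we get $((f^*)^nH\cdot H)\asymp\|(f^*)^nH\|$.

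It remains to see that $\|(f^*)^nH\|^{1/n}\to\rho(f^*)$. The upper bound follows from Gelfand's formula. For the lower bound, $H$ is interior, so any nef $w$ satisfies $w\le_{\mathrm{Nef}} tH$ for some $t>0$, and monotonicity gives $(f^*)^nw\le t(f^*)^nH$. Taking a basis of $N^1$ consisting of interior (ample) classes, we get $\|(f^*)^n\|\lesssim\|(f^*)^nH\|$, again using the norm comparison on the cone. Gelfand's formula then gives the limit, and hence that it equals $\rho(f^*)$. This also shows independence of $H$.

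The main obstacle is the first step, namely that $f^*$ preserves nefness and is compatible with the definition of $f^*$ from \cite[{\bf 2.1}]{Ilya-dP}. I would verify that the pullback defined there agrees with $\pi_*\sigma^*$ on $N^1(X)$. If instead it is defined via the open set $X\setminus Z$, extended by closure, then nefness can be checked directly. A curve $C$ meets $f^*D$ through $f|_{C\setminus Z}$, and by surjectivity $f(C)$ is a curve or a point, so $f^*D\cdot C\ge \deg(D|_{f(C)})\cdot(\text{multiplicity})\ge0$, up to the correction at the finitely many points of $Z$.
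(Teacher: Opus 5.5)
Your proof is correct, but it reaches $\lambda(f)=\rho(f^*)$ by a different route from the paper.

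The paper first takes the Krein--Rutman eigenvector $v$ and then evaluates $\lambda(f)$ on the special class $H=v+H'$. The lower bound $((f^*)^nH\cdot H)\ge\rho(f^*)^n(v\cdot H)$ then comes for free, because the remaining term $((f^*)^nH'\cdot H)$ is $\ge 0$. The upper bound comes from operator-norm growth, and invariance of the nef cone is simply quoted from \cite[Lemma 2.5]{Ilya-dP}.

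You separate the two assertions. You prove nef-invariance directly via the projection formula. For an \emph{arbitrary} ample $H$ you show that $((f^*)^nH\cdot H)$ is comparable to $\|(f^*)^n\|$, using the norm comparison on the nef cone (via Hodge index) and the domination $w\le_{\mathrm{Nef}}tH$. Gelfand's formula then gives the limit, so the cone Perron--Frobenius theorem is needed only for the eigenvector.

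The paper's proof is shorter, but it leans on the independence of $\lambda(f)$ from $H$. That independence is only asserted in the Introduction, and it is genuinely needed, since $v+H'$ is a special real class. Moreover, the paper's bound $\le C\rho(f^*)^n$ with an absolute constant fails when $f^*$ has a nontrivial Jordan block at $\rho(f^*)$. For example, for the map $(t,w)\mapsto(t^2,w^2\frac{t-2}{t-4})$ of Remark~\ref{remark:other-surfs-2}, $f^*$ is such a block with eigenvalue $2$, and $((f^*)^nH\cdot H)$ grows like $n2^n$. This is harmless after taking $n$-th roots. Your version proves existence of the limit and its independence of $H$, and it is insensitive to Jordan blocks.

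Finally, you can drop the hedging in your last paragraph. In {\ref{subsection:pro-1}} the paper uses $f^*(H)=g_*(h^*(H))$, which is exactly your $\pi_*\sigma^*$, so your first step applies as written.
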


This claim is very well-known to specialists of course. Yet we
include its proof for transparency and in order to adapt to our
present setting (see Section~\ref{section:spe} below).

\begin{remark}
\label{remark:other-surfs-1}
Proposition~\ref{theorem:lambda-1-is-rho} implies that
$\lambda(f)$ is an \emph{algebraic integer} because $f^*$
preserves the (full) lattice $\text{Pic}(X)$. However, in the case
of an arbitrary rational dominant map $X \dashrightarrow X$,
already when $X = \p^2$, its dynamical degree can be
\emph{transcendental} (see \cite{Bell-et-al}). Note also that
Proposition~\ref{theorem:lambda-1-is-rho} holds for any surjective
map $f$ and any surface $X$ with irregularity $q(X) = 0$ (see the
arguments in Section~\ref{section:spe}).
\end{remark}

\refstepcounter{equation}
\subsection{}
\label{subsection:p-2}

In Section~\ref{section:pro}, we establish a relation between
$\lambda(f)$ and the (topological) degree $\deg f$ of $f$, namely
that $\lambda(f)^2 \ge \deg f$. Again, this is a classical
\emph{log-concavity property} of the dynamical degrees, which we
prove via algebro-geometric means. Next, following the steps of
the proof, we obtain a \emph{numerical} condition for regularity
(in the current setting):

\begin{theorem}
\label{theorem:main} Let $\lambda(f)^2 = \deg f$ and $f$ is mild
(see Definition~\ref{definition:mild} below). Then the map $f$ is
\emph{regular}.
\end{theorem}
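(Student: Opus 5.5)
We would first redo the proof of the inequality $\lambda(f)^2 \ge \deg f$ in a form where the defect is visible, and then read off what equality forces. Resolve the indeterminacy of $f$ by a sequence of point blow-ups $\pi: \widetilde{X} \map X$, so that $g := f\circ\pi: \widetilde{X} \map X$ is a morphism, and let $E_1,\dots,E_k$ be the total transforms of the exceptional curves. For a class $D$ on $X$ we have $f^*D = \pi_*g^*D$, and
$$
g^*D = \pi^*(f^*D) - \sum_i a_i(D)\,E_i, \qquad a_i(D)\ge 0 \ \text{ for } D \text{ nef}.
$$
Squaring and using $(g^*D)^2 = \deg f\cdot D^2$ gives
$$
(f^*D)^2 = \deg f \cdot D^2 + \sum_i a_i(D)^2 .
$$
Apply this to the nef Perron vector $v$ from Proposition~\ref{theorem:lambda-1-is-rho}, with $f^*v = \lambda(f)v$. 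We get $\lambda(f)^2 v^2 = \deg f\cdot v^2 + \sum a_i(v)^2$. If $v^2>0$, this already yields $\lambda(f)^2\ge\deg f$, and equality forces $a_i(v)=0$ for all $i$.

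The hypothesis $\lambda(f)^2 = \deg f$ would then give $g^*v = \pi^*v$ in $N^1(\widetilde{X})$. The next step is to turn this into regularity. An indeterminacy point $p$ of $f$ has a nonempty exceptional curve $E_p$ over it, and $g$ maps some component of $E_p$ onto a curve $C\subset X$. If $v\cdot C>0$, then $g^*v\cdot E_p \neq 0$ for a suitable component, whereas $\pi^*v\cdot E_p=0$; this contradicts $a_i(v)=0$. So every curve $C$ contracted to a point by $f^{-1}$ (i.e. blown out by $f$) must satisfy $v\cdot C = 0$. Here we would use the mildness hypothesis of Definition~\ref{definition:mild}. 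Presumably it guarantees exactly that the curves arising from indeterminacy points are not $v$-trivial, or that $v$ can be replaced by an ample class. For a surjective map with finite exceptional set $Z$, these curves are images of exceptional curves, so mildness should rule out $v\cdot C=0$. This gives $\pi=\mathrm{id}$, i.e.\ $f$ is regular.

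The case $v^2 = 0$ needs separate treatment, and we expect it to be the main obstacle. There the displayed identity gives no information on its own. We would perturb: take $D = v+\varepsilon H$ with $H$ ample and use the full asymptotics $(f^n)^*H \sim c\,\lambda^n v$. Alternatively, we would apply the identity to $(f^n)^*H$ for large $n$, using algebraic stability $(f^n)^*=(f^*)^n$, and compare the growth of $((f^n)^*H)^2$, which is at most $O(\lambda^{2n})$, with $\deg f^n\, H^2 = \lambda^{2n}H^2$. This shows that the total defect $\sum a_i$ for $f^n$ is bounded by $O(\lambda^{2n})$. We then need it to vanish for $n=1$. We suspect the argument will use mildness here as well, possibly to exclude $v^2=0$ altogether or to control the Jordan structure of $f^*$ at the eigenvalue $\lambda$. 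Making this step rigorous is where the real work lies.
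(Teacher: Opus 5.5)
Your strategy is the paper's. The paper also puts $H:=v$ into the identity $(f^*H)^2=\deg f\,(H^2)-E^2$ (your $\sum_i a_i(D)^2$ is $-E^2$) and uses mildness to get $E\neq 0$. However, the case split you introduce rests on a misreading, and as written your proof is left open exactly where you place the ``main obstacle''. Since $\lambda(f)^2=\deg f$ is a \emph{hypothesis}, you never need to divide by $(v^2)$. Comparing $(f^*v)^2=\lambda(f)^2(v^2)$ with your identity gives $\sum_i a_i(v)^2=(\lambda(f)^2-\deg f)(v^2)=0$ for every value of $(v^2)$.

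Far from being the hard case, $(v^2)=0$ is the trivial one. There the identity forces all $a_i(v)=0$ even without the degree assumption, so a mild non-regular $f$ always has $(v^2)>0$. Consistently, the paper's non-mild counterexample $(t,w)\mapsto(t^2,w^2\frac{t-2}{t-4})$ has as Perron class the ruling $\{t=\mathrm{const}\}$, which has square $0$. Your perturbation and iteration detours are therefore unnecessary. The iteration one could not work as stated anyway: an $O(\lambda^{2n})$ bound on the defect of $f^n$ says nothing about the defect vanishing for $n=1$.

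The step where you had to guess is exactly what Definition~\ref{definition:mild} provides: $(v\cdot h_*(E))>0$ for every $g$-exceptional curve $E$. In your notation, with $\pi$ the blow-ups and $g$ the resolved morphism, this reads $v\cdot g_*E>0$ for every $\pi$-exceptional $E$. If all $a_i(v)=0$, then $g^*v=\pi^*(f^*v)=\lambda(f)\,\pi^*v$ (not $\pi^*v$ as you wrote), which has zero intersection with every $\pi$-exceptional curve. But $g^*v\cdot E=v\cdot g_*E>0$. So non-regularity ($\pi\neq\mathrm{id}$) forces some $a_i(v)\neq 0$, contradicting the previous paragraph. With the case distinction dropped, your argument coincides with the paper's proof.
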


\begin{remark}
\label{remark:other-surfs-2} The case of monomial endomorphisms of
$\p^2$ shows that Theorem~\ref{theorem:main} \emph{does not} hold
for arbitrary rational dominant maps $X \dashrightarrow X$ (see
\cite[Section 3]{Favre}). Note also that regularity of $f$ need
not imply $\lambda(f)^2 = \deg f$: consider the surface $\p^1
\times \p^1$ and the map $f$ given by $(t,w) \mapsto (t^2,w)$.
Finally, even in the surjective case, Theorem~\ref{theorem:main}
may fail for \emph{non-mild} maps $f$, as the following construction
(again for the quadric) shows:\footnote{We thank the anonymous for
pointing out this.}
$$
f: (t,w) \mapsto (t^2,w^2\frac{t-2}{t-4});
$$
this (surjective) map has $\lambda(f) = 2$ and $\deg f = 4$.
\end{remark}

The paper closes with Section~\ref{section:exam}, where we discuss
some basic properties of the entropy of $f$, which we further
support by explicit calculations for a wide class of surjective
endomorphisms. We also consider a seemingly interesting class of
rank $1$ maps (see Definition~\ref{definition:def-rank-1}) and
discuss their structure.

\bigskip

\section{Spectral interpretation (revisited)}
\label{section:spe}

\refstepcounter{equation}
\subsection{}
\label{subsection:sp-1}

Let $V$ be a finite-dimensional real normed space and $K \subset
V$ a closed cone with nonempty interior. Suppose $T: V \map V$ is
a linear map such that $T(K) \subseteq K$. Denote by $\rho(T)$ the
spectral radius of $T$.

Let us recall the following general result (see \cite{K-R}):

\begin{theorem}[Krein--Rutman]
\label{theorem:krein-rutman} In the previous setting, $\rho(T)$ is
an eigenvalue of $T$ and there exists an eigenvector $v \in K, v
\ne 0$, such that $T(v) = \rho(T)v$.
\end{theorem}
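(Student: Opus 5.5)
The statement is the classical Krein--Rutman theorem in finite dimensions. I would prove it directly, first reducing to the case where $K$ is also \emph{pointed}, i.e. $K \cap (-K) = \{0\}$. This reduction is harmless for the application, since the nef cone of a projective surface is pointed. In general one can pass to the quotient $V/(K\cap -K)$, which is preserved by $T$. There the image of $K$ is a closed pointed cone with nonempty interior, and the eigenvalues of $T$ on $V$ are those of the induced maps on the subspace $K\cap(-K)$ and on the quotient. The subspace $K \cap(-K)$ is itself $T$-invariant and contained in $K$, so if the top eigenvalue lives there one argues inside that subspace, where every vector lies in $K$.

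For pointed $K$ with nonempty interior, the dual cone $K^\vee$ also has nonempty interior. Pick $\ell$ in the interior of $K^\vee$; then $\ell > 0$ on $K\setminus\{0\}$, and the slice $B = \{x\in K : \ell(x)=1\}$ is compact and convex.

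The first step is to show that $r := \rho(T)$ is attained along $K$. Fix $e \in \operatorname{int} K$. Since $e$ is interior, any $x \in V$ satisfies $-c\|x\| e \le_K x \le_K c\|x\| e$ for a constant $c$. Positivity of $T$ then gives $\|T^n x\| \le C\, \ell(T^n e)\, \|x\|$, so $\|T^n\|^{1/n}$ and $\ell(T^n e)^{1/n}$ have the same limit $r$.

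Next, for $t > r$, consider the resolvent $R_t = (t - T)^{-1} = \sum_{n\ge 0} t^{-n-1} T^n$. It maps $K$ into $K$, and it blows up as $t \downarrow r$ when tested against $e$. The blow-up holds because if $\ell(R_t e)$ stayed bounded, then $\sum_n \ell(T^n e)\, r^{-n-1}$ would converge. Unbounded growth of the resolvent near the spectral circle is the standard fact. The cleanest route is to take the Jordan block of maximal size $m$ among eigenvalues of modulus $r$; one gets $\ell(T^ne) \gtrsim n^{m-1} r^n$ along a subsequence, which is \emph{not} summable against $r^{-n}$ whenever $m\ge 1$. I expect this growth estimate to be the main obstacle. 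The point needing care is that cancellation among several eigenvalues of modulus $r$ cannot kill the growth, and positivity plus interiority of $e$ is exactly what prevents this.

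Finally, set $x_t = R_t e/\ell(R_t e) \in B$. Then
\[
(t - T)x_t = e/\ell(R_t e) \to 0 \quad (t \downarrow r).
\]
By compactness of $B$, a subsequence converges to some $v\in B$, so $v \in K$, $\ell(v)=1$ and $v\neq 0$. Passing to the limit gives $T v = r v$. Hence $r=\rho(T)$ is an eigenvalue with eigenvector $v \in K\setminus\{0\}$. The case $r=0$ is included: then $T$ is nilpotent, and the same limit argument produces $v\in K$ with $Tv=0$.
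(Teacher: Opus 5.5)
The paper does not prove this statement; it only cites \cite{K-R}. For a \emph{pointed} cone your argument is a correct, self-contained finite-dimensional proof: positivity of the resolvent $R_t$ for $t>\rho(T)$, blow-up of $\ell(R_te)$ as $t\downarrow\rho(T)$, and compactness of the base $B$. So the comparison is between citing the general Banach-space theorem and giving an elementary direct argument. Two corrections are needed.

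First, delete the reduction to the pointed case. It cannot work, because the statement is false for non-pointed cones. Take $K=V$, which is closed, has nonempty interior, and is preserved by every $T$. With $T=-\mathrm{id}$, or a rotation of $\rea^2$ by $\pi/2$, one has $\rho(T)=1$, yet $1$ is not an eigenvalue. This is exactly your case where the spectral radius is carried by $L=K\cap(-K)$. On $L$ the cone imposes no constraint at all, so there is nothing to argue with ``inside that subspace.'' Pointedness has to be a hypothesis; it is part of Krein and Rutman's definition of a cone. It does hold for the nef cone, since a class $D$ with $\pm D$ both nef is numerically trivial.

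Second, the growth estimate you single out as the main obstacle is already contained in your Step 1, and no Jordan-block analysis is needed.
\begin{itemize}
\item Step 1 gives $\|T^n\|\le C\,\ell(T^ne)$.
\item On the other hand, $\|T^n\|\ge c\,r^n$ with $c>0$ independent of $n$. To see this, evaluate $T^n$ on the real and imaginary parts of a complex eigenvector whose eigenvalue has modulus $r$.
\item Hence $\ell(T^ne)\,r^{-n}\ge c/C$, so $\sum_n \ell(T^ne)r^{-n-1}$ diverges.
\item Monotone convergence then gives $\ell(R_te)\to\infty$ as $t\downarrow r$. For $r=0$, simply use $\ell(R_te)\ge\ell(e)/t$.
\end{itemize}

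Your route also gives something the citation does not. Apply Step 1 with $e=H$ ample and $\ell=(\,\cdot\,H)$, which is positive on nonzero nef classes by the Hodge index theorem. This yields $((f^*)^nH\cdot H)^{1/n}\to\rho(f^*)$ directly, i.e.\ $\lambda(f)=\rho(f^*)$, without using the Perron vector at all.
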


\refstepcounter{equation}
\subsection{}
\label{subsection:sp-2}

We retain the notations from Introduction. Take $V := N^1(X)$, $T
:= f^*$ and $K := $ the nef cone of $X$ (cf. \cite[Lemma
2.5]{Ilya-dP}), so that Theorem~\ref{theorem:krein-rutman}
provides a nef $\rea$-class $v$ on $X$ with $f^*(v) = \rho(f^*)v$.

Choose the divisor $H$ from {\ref{subsection:p-1}} above in the
form $v + H'$ for some ample $H' \in N^1(X)$.

\begin{lemma}
\label{theorem:lambda-rho-1} There exists an absolute constant $C$
such that
$$
C^{-1}\rho(f^*)^n \le ((f^*)^n(H) \cdot H) \le C\rho(f^*)^n.
$$
\end{lemma}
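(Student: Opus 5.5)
We bound $((f^*)^n(H)\cdot H)$ from both sides using the choice $H = v + H'$, the invariance $f^*(K)\subseteq K$ of the nef cone, and the bilinearity of the intersection form on $N^1(X)$. Write $\rho := \rho(f^*)$ and expand
$$
((f^*)^n(H)\cdot H) = \rho^n (v\cdot H) + ((f^*)^n(H')\cdot H).
$$
Both summands are nonnegative. Indeed, $(f^*)^n(H')$ is nef because $f^*$ preserves the nef cone, and the intersection of a nef class with the ample class $H$ is $\ge 0$. Moreover $(v\cdot H) = (v\cdot v) + (v\cdot H') > 0$: since $v$ is nef we have $(v\cdot v)\ge 0$, and $(v\cdot H')>0$ because $v\ne 0$ is nef and $H'$ is ample. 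This last point is the Hodge index theorem: a nonzero nef class pairs positively with an ample class. This gives the lower bound with $C^{-1} = (v\cdot H)$, after enlarging $C$ if needed.

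For the upper bound we must control $((f^*)^n(H')\cdot H)$ by a multiple of $\rho^n$. The obvious estimate is
$$
|((f^*)^n(H')\cdot H)| \le \|(f^*)^n\|\,\|H'\|\,\|H\|\cdot c,
$$
and by Gelfand's formula $\|(f^*)^n\|^{1/n}\to\rho$. This, however, only yields $\le C_\epsilon(\rho+\epsilon)^n$, or $\le C n^{k}\rho^n$ if there are Jordan blocks for eigenvalues of modulus $\rho$. The genuine difficulty is therefore to rule out polynomial growth, i.e.\ to show that $f^*$ has no nontrivial Jordan block at the peripheral spectrum.

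To handle this I would use the cone structure instead. Since $v$ lies in $K$ and $H'$ is ample, hence interior to $K$, the class $H'$ can be chosen so that $H' - \delta v$ is still nef for small $\delta$. This gives no upper bound, though. The upper bound needs $H'$ dominated by an eigenvector, and that holds only if $v$ is interior to $K$ (i.e.\ ample) or if we use the freedom in choosing $H'$.

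The plan I would actually pursue is to show that the restriction of $f^*$ to $N^1(X)$ is quasi-unipotent-free at $\rho$. One can exploit that $f^*$ is compatible with the intersection form up to the degree, $(f^*a\cdot f^*b)=\deg f\,(a\cdot b)$; this is available for surjective maps from \cite{Ilya-dP} once the relevant push-forward formula $f_*f^* = \deg f$ holds. Consequently $f^*/\sqrt{\deg f}$ is an isometry of the Lorentzian form. For such isometries of a hyperbolic lattice, Jordan blocks occur only when $\rho=\sqrt{\deg f}$, and even then they have size at most $3$. I would then treat the case $\rho>\sqrt{\deg f}$ separately: there the eigenvalue $\rho$ is simple and all other eigenvalues have modulus $<\rho$, which gives the bound $\le C\rho^n$ directly.

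The main obstacle is the equality case $\rho^2=\deg f$, where growth $n^2\rho^n$ is a priori possible. Since the lemma claims an absolute constant, I expect the argument there to use surjectivity or mildness to exclude parabolic behaviour. If that fails, I would settle for the weaker statement that $\lim\sqrt[n]{\cdot}=\rho$, which is all that Proposition~\ref{theorem:lambda-1-is-rho} requires.
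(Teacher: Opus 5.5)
Your lower bound is the paper's argument verbatim: expand $H=v+H'$, use $(f^*)^nv=\rho^nv$, the nefness of $(f^*)^nH'$, and $(v\cdot H)>0$. For the upper bound the paper says only that it ``follows from the operator norm growth''. You have identified exactly why this is not enough: $\|(f^*)^n\|\le C\rho^n$ requires that no eigenvalue of modulus $\rho$ carries a nontrivial Jordan block.

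In fact the upper bound with $C$ independent of $n$ is false in the paper's setting, so no argument can close this step. Two examples:
\begin{itemize}
\item The automorphism $(x,y)\mapsto(x,x+y)$ of $E\times E$, with $E$ an elliptic curve, is surjective and regular, hence vacuously mild. It has $\rho(f^*)=\lambda(f)=\deg f=1$. Yet for $H=[E\times\{0\}]+[\{0\}\times E]$ one computes $((f^*)^nH\cdot H)=n^2+2$. Parabolic automorphisms of rational Halphen surfaces show the same behaviour with $q(X)=0$.
\item The paper's own map in Remark~\ref{remark:other-surfs-2}, $(t,w)\mapsto(t^2,w^2\frac{t-2}{t-4})$, has $\rho=2$. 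Let $\ell_t,\ell_w$ be the classes of the fibres $\{t=\mathrm{const}\}$ and $\{w=\mathrm{const}\}$. Then $f^*\ell_t=2\ell_t$ and $f^*\ell_w=\ell_t+2\ell_w$, so $(f^*)^n\ell_w=n2^{n-1}\ell_t+2^n\ell_w$. Hence $((f^*)^nH\cdot H)\ge c\,n2^n$ for every ample $H$.
\end{itemize}

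The second example also shows where your proposed repair fails. The identity $(f^*a\cdot f^*b)=\deg f\,(a\cdot b)$ holds for morphisms, not for non-regular surjective maps. The computation in {\ref{subsection:pro-1}} gives $(f^*H)^2=(\deg f)(H^2)-E^2$ with $-E^2>0$ whenever $E\neq 0$; this strict inequality is exactly what drives Theorem~\ref{theorem:main}. In the example, $(f^*\ell_w)^2=4$, whereas $\deg f\,(\ell_w^2)=0$. So $f^*/\sqrt{\deg f}$ is not a Lorentzian isometry, and the loxodromic/parabolic dichotomy is unavailable. Where the dichotomy is available, i.e.\ for regular $f$, your hope that surjectivity or mildness excludes the parabolic case $\rho^2=\deg f$ is refuted by the automorphism example.

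The correct endpoint is your fallback:
$C^{-1}\rho^n\le((f^*)^nH\cdot H)\le C\,n^{k-1}\rho^n$,
where $k$ is the size of the largest Jordan block of $f^*$ at an eigenvalue of modulus $\rho$. Equivalently, $\le C_\varepsilon(\rho+\varepsilon)^n$ by Gelfand's formula. After taking $n$-th roots, this is all that Proposition~\ref{theorem:lambda-1-is-rho} uses.
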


\begin{proof}
The upper bound follows from the operator norm growth. In turn,
the lower bound follows from the fact that $v$ is an eigenvector
of $f^*$ and that the intersections $H \cdot v, H \cdot H'$ are
positive.
\end{proof}

In order to prove Proposition~\ref{theorem:lambda-1-is-rho} it
suffices to take the $n^{\text{th}}$ roots of the inequalities in
Lemma~\ref{theorem:lambda-rho-1} and then pass to the limit $n \to
\infty$.

\begin{remark}
\label{remark:nef-not-big} Note that the class $v$ \emph{is not
necessarily big} (i.e. it need not satisfy $(v^2) > 0$). As an
example we consider $X : = \p^1_{\bf t} \times \p^1_{\bf w}$ and
its (regular) endomorphism
$$
f([t_0:t_1] \times [w_0:w_1]) := [t^d_0:t^d_1] \times [w_0:w_1]
$$
for some $d \ge 2$. In the basis of $N^1(X)$, corresponding to two
rulings on $X$, the operator $f^*$ is represented by the matrix
$\begin{pmatrix} d & 0 \\ 0 & 1 \end{pmatrix}$. Then we have
$\rho(f^*) = d$ and $(v^2) = 0$.
\end{remark}

\bigskip

\section{Relation between $\lambda$ and $\deg$}
\label{section:pro}

\refstepcounter{equation}
\subsection{}
\label{subsection:pro-1}

Consider a resolution diagram for $f$:

\begin{equation}
\nonumber \xymatrix{
& Y \ar[dl]_g \ar[dr]^{h} \\
X \ar@{-->}[rr]^{f} && X.}
\end{equation}
Here $g$ is a composition of blow-ups and $h$ is a regular
morphism. Let us give the following:

\begin{definition}
\label{definition:mild} The map $f$ is called \emph{mild} if for an
appropriate $g$ and the Perron class $v$ from
Proposition~\ref{theorem:lambda-1-is-rho} one has $(v \cdot h_*(E)) > 0$ for any $g$-exceptional curve $E$.
\end{definition}

Let us see first that $\lambda(f)^2 \ge \deg f$ for a given surjective $f$. Then for $H$ from {\ref{subsection:p-1}} we have $f^*(H) =
g_*(h^*(H))$ and so
$$
h^*(H) = g^*(f^*(H)) - E,
$$
where $E := \displaystyle\sum_i a_i E_i$, $E_i$ are
$g$-exceptional and $a_i$ are some coefficients. It follows that
$$
(f^*(H)^2) = (h^*(H)^2) - E^2 = (\deg f)(H^2) - E^2 \ge (\deg
f)(H^2)
$$
because the intersection matrix $\{(E_i \cdot E_j)\}_{i,j}$ is
negative definite.

Further, since both classes $f^*(H)$ and $H$ are nef, the Hodge
Index Theorem gives
$$
(f^*(H) \cdot H)^2 \ge (f^*(H)^2)(H^2).
$$
Thus we get
$$
(f^*(H) \cdot H)^2 \ge (\deg f)(H^2)^2.
$$

Finally, replacing $f$ with $f^n$ and using $\deg f^n = (\deg
f)^n$, we get
$$
\sqrt[n]{((f^n)^*(H) \cdot H)^2} \ge (\deg f)\sqrt[n]{(H^2)^2},
$$
which for $n \to \infty$ becomes $\lambda(f)^2 \ge \deg f$, as
claimed.

\refstepcounter{equation}
\subsection{}
\label{subsection:pro-2}

We now turn to the proof of Theorem~\ref{theorem:main}. Let
$\lambda(f)^2 = \deg f$ and suppose the map $f$ is
\emph{non-regular}. The latter means that the resolution $g: Y
\map X$ in {\ref{subsection:pro-1}} is a non-trivial birational
contraction.

Put (formally) $H := v$ in {\ref{subsection:pro-1}}. Then it
follows that some of $a_i \ne 0$, since $f$ is mild,
and
$$
(f^*(v)^2) = (h^*(v)^2) - E^2 > (\deg f)(v^2).
$$
On the other hand, we have $(f^*(v)^2) = (\lambda(f)^2)(v^2)$, which contradicts the initial equality $\lambda(f)^2 = \deg f$. This proves
Theorem~\ref{theorem:main}.

\bigskip

\section{Examples and discussion}
\label{section:exam}

\refstepcounter{equation}
\subsection{}
\label{subsection:exam-1}

Let again the setting be as in {\ref{subsection:p-1}} above. With
the map $f$ are associated two kinds of (\emph{algebraic})
``entropies'': $\log\lambda(f)$ and $\log\deg f$. Both numbers
measure certain rate of volume growth of algebraic cycles on $X$.
For $\log\lambda(f)$, we have seen this in
Proposition~\ref{theorem:lambda-1-is-rho} (see also the initial
definition of $\lambda(f)$), whereas $\log\deg f$ can be described
as follows (cf. \cite[{\bf 1.1}]{Ilya-Ilya}). Let $\omega$ be a
positive closed $(1,1)$-form on $X$ (e.g. the restriction of the
Fubini--Studi form). Then
$$
\log\deg f = \lim_{n \to \infty}\frac{1}{n}\log \int_X
f^{n*}(\omega \wedge \omega).
$$
Note that the usual (\emph{topological}) entropy $h(f)$ of $f$
satisfies $h(f) \le \max\{\log\lambda(f),\log\deg f\}$ (see
\cite[Th\'eor\`eme 1]{Dinh-Sib}).

We proceed with specific examples.

\begin{ex}[{see \cite[Example 1.6]{Ilya-Ilya}}]
\label{example:surj-f-p-2} The map $f = [f_0:f_1:f_2]: \p^2
\dashrightarrow \p^2$ with quadratic components $$f_0 := xz +
y^2,\ f_1 := yz + x^2,\ f_2 := x^2 + y^2$$ has $\lambda(f) = 2$
and $\deg f = 3$. Thus $h(f) \le \log 3$.
\end{ex}

\begin{ex}[{see \cite[Proposition 3.2]{Ilya-ML}}]
\label{example:surj-cubic-p-2} The map $f = [f_0:f_1:f_2]: \p^2
\dashrightarrow \p^2$ with cubic components $$f_0 := x^2y + y^2z,\
f_1 := xyz,\ f_2 := x^2y + xy^2 + 2y^2z + z^2(x + y)$$ has
$\lambda(f) = 3$ and $\deg f = 4$. Thus $h(f) \le \log 4$.
\end{ex}

\refstepcounter{equation}
\subsection{}
\label{subsection:exam-2}

Let us give the following:

\begin{definition}
\label{definition:def-rank-1} The map $f: X \dashrightarrow X$ is
said to be \emph{of rank $1$} if $f^*$ has rank $1$.
\end{definition}

The preceding examples are particular instances of rank $1$ maps.
Furthermore, according to \cite[Theorem 1.6]{Forst} there is a
surjective map $\varphi: \p^2 \dashrightarrow X$, as is actually
the case for \emph{any rational surface} $X$. Then, composing this
$\varphi$ with a linear projection $\psi: X \map \p^2$, yields a
surjective endomorphism $f = \varphi \circ \psi$ of $X$. By
construction the linear map $f^*$ has rank $1$ because it factors
through $\varphi^*: \rea^2 \to \rea$.

\begin{ex}[{see \cite[Remark 2.3]{Ilya-dP}}]
\label{example:surj-cubic-p-1-p-1} Consider $X := \p^1 \times
\p^1$ equipped with product coordinates $[t_0:t_1] \times
[w_0:w_1]$. Then Segre embedding maps $X$ (isomorphically) onto a
quadric in $\p^3$ given by the equation $x_0x_3 - x_1x_2 = 0$ in
projective coordinates $x_0 := t_0w_0, x_1 := t_0w_1$, etc. Define
(surjective) map $\varphi: \p^2 \dashrightarrow X$ by
$$
\varphi([x:y:z]) = [xy:x^2-y^2] \times [xz:x^2-z^2]
$$
and take $\psi: X \map \p^2$ to be the projection from the point
$[1:0:0:1] \in \p^3$. Then we have
$$
f([t_0:t_1] \times [w_0:w_1]) = [t_0t_1w_0w_1:t_0^2w_1^2 -
t_1^2w_0^2] \times [t_0w_1(t_0w_0 - t_1w_1):t_0^2w_1^2 - (t_0w_0 -
t_1w_1)^2].
$$
The matrix of the operator $f^*$ --- in the basis as in
Remark~\ref{remark:nef-not-big} --- is $\begin{pmatrix} 2 & 2 \\ 2
& 2
\end{pmatrix}$. It follows that $\lambda(f) = 4$. Note also that
$\deg f = 8$. Thus $h(f) \le \log 8$.
\end{ex}

\refstepcounter{equation}
\subsection{}
\label{subsection:exam-3}

Every rank $1$ surjective map $f$ satisfies $\lambda(f) = $ the
trace of $f^*$ (cf. Proposition~\ref{theorem:lambda-1-is-rho}).
Hence we have $\lambda(f) \in \cel$. At the same time, as
Remark~\ref{remark:nef-not-big} shows, the property $\lambda(f)
\in \cel$ \emph{does not} characterize rank $1$ endomorphisms.
This abundance of ``degenerate'' surjective endomorphisms for del
Pezzo surfaces should be contrasted with the more restrictive
\emph{non-degenerate} case (see \cite[Theorem 1.2]{Ilya-dP}).

In order to get a grasp of rank $1$ maps one may wonder: does any
such $f$ factor through $\p^2$ via some surjective maps $\varphi$
and $\psi$ as above? It turns out that it \emph{need not}. To see
this let us modify the map $f$ from
Example~\ref{example:surj-cubic-p-1-p-1} as follows:
$$
f_\varepsilon([t_0:t_1] \times [w_0:w_1]) =
[t_0t_1w_0w_1:t_0^2w_1^2 - \varepsilon t_1^2w_0^2] \times
[t_0w_1(t_0w_0 - \varepsilon t_1w_1):t_0^2w_1^2 - (t_0w_0 -
t_1w_1)^2]
$$
for the parameter $\varepsilon$ close to $1$. In other words, we
are considering a family of rational maps $f_\varepsilon:
\p^1_{{\bf t}} \times \p^1_{{\bf w}} \dashrightarrow \p^1_{{\bf
t}} \times \p^1_{{\bf w}}$, written shortly as
$$
{\bf t} \times {\bf w} \mapsto f_\varepsilon({\bf t}) \times
f_\varepsilon({\bf w})
$$
(compare with \cite[{\bf 1.3}]{Ilya-Ilya}).

The indeterminacy locus $Z_{f_\varepsilon}$ of $f_\varepsilon$
consists of two components $(f_\varepsilon({\bf t}) = 0)$ and
$(f_\varepsilon({\bf w}) = 0)$. It follows that
$Z_{f_\varepsilon}$ is \emph{finite}, of total length $16$, for
all $\varepsilon$. Hence the schemes $Z_{f_\varepsilon}$ form a
\emph{flat family}. Now, since $f_1 = f$ is surjective, the proof
of \cite[Theorem 1.10]{Ilya-Ilya} applies verbatim and gives
surjectivity of the general map $f_\varepsilon$ as well. But for
the latter following holds:

\begin{prop}
\label{theorem:no-fact} If $\varepsilon \ne 1$ is generic, then we
have $f_\varepsilon \ne \varphi \circ \psi$ for any surjective
maps $\varphi, \psi$ as above.
\end{prop}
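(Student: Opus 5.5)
We argue by contradiction. Suppose $f_\varepsilon = \varphi\circ\psi$ with $\psi: X \map \p^2$ a linear projection of the Segre quadric $X\subset\p^3$ from a point $p\in X$, and $\varphi:\p^2\dashrightarrow X$ surjective. The plan is to extract from such a factorization a rigid geometric invariant of $f_\varepsilon$ that holds at $\varepsilon=1$ but fails for generic $\varepsilon$.

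First we use the structure of $\psi$. The projection from $p=[a]\times[b]\in X$ contracts the two rulings through $p$, namely $\{[a]\}\times\p^1$ and $\p^1\times\{[b]\}$, to two points of $\p^2$, and it is birational. Hence $f_\varepsilon=\varphi\circ\psi$ contracts both of these lines: each is mapped to a single point, or into the indeterminacy locus. Moreover $\psi$ is an isomorphism away from these lines, so $f_\varepsilon$ and $\varphi$ determine each other. Consequently $f_\varepsilon$ factors through $\psi$ if and only if $f_\varepsilon$ is constant along both rulings through some point $p$, i.e. $f_\varepsilon\circ\psi^{-1}$ extends to a rational map of $\p^2$. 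Surjectivity of $\varphi$ then follows automatically from that of $f_\varepsilon$. So the statement reduces to the following claim: \emph{for generic $\varepsilon$, there is no point $p\in X$ such that both fibres of the rulings through $p$ are contracted by $f_\varepsilon$.}

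Second, we compute the contracted curves explicitly. A fibre $t=[a]$ is contracted exactly when the components of $f_\varepsilon$, restricted to it as forms in ${\bf w}$, become proportional after removing common factors. Take the ${\bf t}$-component $[t_0t_1w_0w_1 : t_0^2w_1^2-\varepsilon t_1^2w_0^2]$. Its two entries, as quadratic forms in ${\bf w}$, are proportional only if their resultant vanishes. This gives a finite set of values $[a]$, namely those with $t_0t_1=0$ or $a$ satisfying a polynomial in $\varepsilon$. We must also impose the condition that the ${\bf w}$-component $[t_0w_1(t_0w_0-\varepsilon t_1w_1): t_0^2w_1^2-(t_0w_0-t_1w_1)^2]$ is constant on the same fibre. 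We treat the fibres $w=[b]$ in the same way. At $\varepsilon=1$ the point $p=[1:0:0:1]$, i.e. $([1:0],[0:1])$, works by construction. We then check that the conditions defining this point involve $\varepsilon$ nontrivially; for instance, along $t_1=0$ the ${\bf w}$-component becomes $[w_1w_0 : w_1^2-w_0^2]$, which is not constant. We will find the full finite candidate set for general $\varepsilon$ and verify that no candidate has both of its ruling lines contracted.

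The main obstacle is this second step. A line may be "contracted" in a degenerate sense, because it lies in the base locus after saturation, so we must work with the components after dividing out the common factor. We also need to exclude the two possibilities carefully: each line mapping to a point, and a line landing in the indeterminacy of $\varphi$. As an alternative we would try a semicontinuity argument. The locus of $\varepsilon$ for which a factorization exists is constructible, since it is the image of an incidence variety over $X$. So it suffices to exhibit a single $\varepsilon_0$, for example $\varepsilon_0=0$ or $2$, with no admissible $p$; genericity then follows.
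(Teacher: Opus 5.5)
Your reduction only covers projections centred on a point of $X$, and that is not the case the statement is about. This is a genuine gap.

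\textbf{The centre of $\psi$ is off the quadric.} In the construction before the statement, $\psi$ is the projection from $[1:0:0:1]$. This point does \emph{not} lie on $X$, since $x_0x_3-x_1x_2=1$ there. The point $([1:0],[0:1])$ you identify it with is actually $[0:1:0:0]$. So $\psi$ is a regular $2:1$ cover of $\p^2$ branched along a conic, and it contracts no curves.

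Moreover, a projection centred at $p\in X$ is not surjective in the paper's sense. For any finite $Z$, the image of $X\setminus Z$ misses every point of the line corresponding to the tangent plane $T_pX$, except the two points to which the rulings through $p$ are contracted. So the hypothesis that $\psi$ is surjective excludes exactly your case and forces $\psi$ to be regular of degree $2$; the paper states this as the first step of its proof.

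\textbf{Your criterion fails at $\varepsilon=1$.} By your own computation, $f_\varepsilon$ maps the line $t=[1:0]$ via $[w_0w_1:w_1^2-w_0^2]$. This does not involve $\varepsilon$ and is non-constant. Similarly, the line $w=[0:1]$ maps via $[-\varepsilon t_0t_1:t_0^2-t_1^2]$, also non-constant. So your criterion fails at $\varepsilon=1$, where a factorization does exist. The equivalence claimed in your first step is therefore false. Even a complete case check in your second step would only exclude factorizations through a birational projection, which the hypotheses already exclude.

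\textbf{The missing idea: the deck involution.} If $\psi$ is the double cover, then $f_\varepsilon=\varphi\circ\psi$ gives $f_\varepsilon\circ\iota=f_\varepsilon$ for the biregular covering involution $\iota$. This involution swaps the rulings, so $\iota(t,w)=(A(w),A^{-1}(t))$ with $A\in\pg_2(\com)$. For the centre $[1:0:0:1]$ one finds $\iota(t,w)=(-1/w,-1/t)$. It contracts nothing, so the invariant you were looking for is invisible here.

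The paper rules out such $\iota$ for generic $\varepsilon$ as follows.
\begin{itemize}
\item The first component of $f_\varepsilon$ is $s/(s^2-\varepsilon)$ with $s=t/w$, whose fibres are $\{s,-\varepsilon/s\}$.
\item So $\iota$ must send $s$ to $s$ or to $-\varepsilon/s$. This forces $A(w)=K/w$, or $A(w)=Lw$ with $L^2=-\varepsilon$.
\item A direct check shows the second component $t(tw-\varepsilon)/(t^2-(tw-1)^2)$ is then not invariant. The first option survives only for $K=-\varepsilon$ with $\varepsilon=\pm1$, and the second never does.
\end{itemize}

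\textbf{The fallback argument.} Your genericity argument is not valid as stated. A constructible subset of the $\varepsilon$-line can be cofinite and still miss a given $\varepsilon_0$. One good value suffices only if you know the bad locus is closed. That is not automatic here, because the admissible centres form the non-proper set $\p^3\setminus X$.
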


\begin{proof}
Suppose the contrary. Then, since $f^*_\varepsilon = f^*_1$ for
all $\varepsilon$ (cf. Example~\ref{example:surj-cubic-p-1-p-1}),
it follows that $\psi: X \map \p^2$ must necessarily be a
\emph{regular} projection. Hence there is an involution $\iota$,
acting biregularly on $X$, such that $X \slash \iota = \p^2$ and
$f_\varepsilon \circ \iota = f_\varepsilon$.

Put $t := t_0 \slash t_1$ and $w := w_0 \slash w_1$. It is easy to
see that $\iota$ acts as follows:
$$
(t,w) \mapsto (A(w),A^{-1}(t))
$$
for some $A \in \pg_2(\com)$. We also have
$$
f_\varepsilon(t,w) = \big(\frac{tw}{t^2 - \varepsilon w^2},
\frac{t(tw - \varepsilon)}{t^2 - (tw - 1)^2}\big).
$$
The first component of $f_\varepsilon$ depends only on the ratio
$s := t \slash w$. Then the property $f_\varepsilon \circ \iota =
f_\varepsilon$ forces $s' := A(w) \slash A^{-1}(t)$ to satisfy
either $s' = s$ or $s' = -\varepsilon\slash s$. This, in turn,
gives the following options:

\begin{itemize}

    \item $A(w) = K \slash w$ for some $K \in \com$;

    \smallskip

    \item $A(w) = Lw$ for $L^2 = -\varepsilon$.

\end{itemize}

It is now immediate that the second component
$\displaystyle\frac{t(tw - \varepsilon)}{t^2 - (tw - 1)^2}$ of
$f_\varepsilon$ fails to be $\iota$-invariant. This contradiction
finishes the proof of Proposition~\ref{theorem:no-fact}.
\end{proof}

Thus we see that even for ``simple'' surfaces the structure of
their surjective endomorphisms is quite tricky. It would be
interesting to obtain a rough, at least, description of such
endomorphisms $f$ --- perhaps in terms of a ``statistics'' of
numbers $\lambda(f) \slash \deg f$ (compare with \cite[Lecture
3]{Arnold-dubna}).

\bigskip

\bigskip

{\bf Competing Interest declaration}. There is none to declare.

\bigskip

\bigskip

\thanks{{\bf Acknowledgments.} I am grateful to I. Zhdanovskiy for
fruitful conversations. The work was carried out at the Center for
Pure Mathematics (MIPT) and was supported by the Russian Science
Foundation under grant \textnumero\, 25-21-00083
(https://rscf.ru/project/25-21-00083/).

\end{document}